\numberwithin{equation}{section}
 \newcommand{\m}{\mathfrak{m} }
\newcommand{\Z}{\mathbb{Z}}
\newcommand{\N}{\mathbb{N}}
\theoremstyle{plain}
\newtheorem{thm}{Theorem}[section]
\newtheorem{lem}[thm]{Lemma}
\newtheorem{cor}[thm]{Corollary}
\newtheorem{prop}[thm]{Proposition}
\newtheorem{rem}[thm]{Remark}
\newtheorem{defin-rem}[thm]{Definition and Remark}
\newcommand{\Tor}{\operatorname{Tor}}
\newcommand{\reg}{\operatorname{reg}}
\newcommand{\rate}{\operatorname{rate}}
\newcommand{\rat}{\operatorname{Rate}}
\begin{document}

\title[ ] { rate and syzigies of modules over Veronese subrings}

\begin{abstract}
Let $K$ be a field, $R$ be a standard graded $K$-algebra and $M$ be
a finitely generated graded $R$-module. The rate of $M$,
$\rate_R(M)$, is
 a measure of the growth of the shifts in the minimal graded free resolution of $M$.

 In this paper, we study the rate of Veronese modules of $M$. More precisely, it is shown that
 $\rate_{R^{(c)}}(M)\leq \lceil \max\{
\rate_{R}(M),\rat(R)\}/c\rceil+\max\{0,\lceil
t^{R}_{0}(M)/c\rceil\},$ for all $c\geq
 1$. This
 extends a result of Herzog  et al.
 As a consequence of this, if $M$ is generated in degree zero, then
$\reg_{R^{(c)}}(M)=0$, for all
 $c\geq \max\{\rate_{R}(M), \rat(R)\}$.

Also, for  powers of the homogeneous maximal ideal $\m$ of $R$, it
is shown that $\rate_{R^{(c)}}(\m^{s}(s))\leq \lceil
\rat(R)/c\rceil$, for all $c\geq 1$. In particular case, we give a
simple proof to  a theorem of Backelin.

 \end{abstract}

 \author[R.~ Ahangari Maleki]{Rasoul Ahangari Maleki}
\email{rahangari@ipm.ir, rasoulahangari@gmail.com}






\subjclass[2000]{13D02 (primary), 16W50 , 13D07 (secondary), 16W70,
16S37 } \keywords{Minimal free resolution, Koszul algebra, Syzygy
module, Regularity, Rate,  Veronese subring}
\maketitle

\smallskip
\section*{Introduction}
Let $R$ be a standard graded $K$-algebra with the homogeneous maximal ideal $\m$ and residue field $K$.
There are several invariants attached to a finitely generated graded $R$-module $M$. One is the Castelnuovo-Mumford regularity, which plays an important role
in the study of homological properties of $M$. This invariant can be infinite. Avramove and peeva in \cite{AP} proved that $\reg_R(K)$ is zero or infinite.
The ring $R$ is called  Koszul if $\reg_R(K)=0$. From certain point of views, Koszul algebras behave homologically as polynomial rings. Avramove and Eisenbud in \cite{A-E} showed that if $R$ is Koszul, then the regularity of every finitely generated graded $R$-module is finite.\\
\indent Another important invariant is the rate of  graded modules.
 The notion of rate for algebras introduced by Backelin \cite{B} and it is generalized in \cite{ABH} for graded modules.
 The rate of a finitely generated graded module $M$ over $R$ is defined by
 $$\rate_R(M):=\sup\{t_{i}^{R}(M)/i  :\\ i\geq 1\},$$  where
  $t_{i}^{R}(M):=\max\{ j: \dim_K(\Tor_{i}^{R}(M,K)_{j})\neq 0)\}$.
  This invariant is always finite (see \cite{ABH}). The Backelin rate of
  the algebra $R$ is denoted by $\rat(R)$ and is equal to $\rate_R(\m(1))$, the rate of
  the unique homogenous maximal ideal of $R$ which is shifted by 1.

 By definition,   $\rat(R)\geq 1$ and the equality holds if and only if $R$ is Koszul.
Indeed, the rate of a graded algebra $R$ is an invariant that
measures how far $R$ is from being Koszul.

 Let $c$ be a positive
integer. The $c$-th Veronese subring of the standard graded
$K$-algebra $R= \bigoplus_{i\geq 0}R_i$ is denoted by $R^{(c)}$ and
defined by $R^{(c)}:=\bigoplus _{i\geq 0}R_{ic}$. Backelin
(\cite{B}) used complex arguments about a lattice of ideals, derived
from a presentation of $R$ as a quotient of a free noncommutative
algebra, to prove that the $c$-th Veronese subring $R^{(c)}$ is a
Koszul algebra for all sufficiently large values of $c$. Indeed he
showed that $\rat(R^{(c)})\leq \lceil\rat(R)/c \rceil$, where
$\lceil r\rceil$ denotes  the smallest integer larger than the real
number $r$. Eisenbud, Reeves and Totaro (\cite{ETR}) started their
work from a request by George Kempf for a simpler proof to Backelin
result. In order to do it,
they showed that $R^{(c)}$ admits a quadratic initial ideal for all
sufficiently large values of $c$.

In their paper (\cite{ABH}) Aramova, B$\breve{a}$rc$\breve{a}$nescu
and Herzog showed  that if $M$ is generated in degree zero then,
$$\rate_{R^{(c)}}(M)\leq \max\{ \lceil \rate_{R}(M)/c \rceil,1\},$$
for all $c\geq \max\{1,\rate_S(R)\}$, where $S$ is a polynomial ring
such that $R$ is a homomorphic image of it. Moreover, from their
result
 if $R$ is a polynomial ring, then the inequality holds for all $c\geq1$.

 The purpose of this paper is to extend and improve these results.
Our main result (Theorem \ref{mainthm}) states that for every
finitely generated graded $R$-module $M$,
$$\rate_{R^{(c)}}(M)\leq \lceil \max\{
\rate_{R}(M),\rat(R)\}/c\rceil+\max\{0,\lceil
t^{R}_{0}(M)/c\rceil\},$$
 for all $c\geq1.$ Therefore, if
$c\geq\rat(R)$, then
$$\rate_{R^{(c)}}(M)\leq \max\{ \lceil
\rate_{R}(M)/c \rceil,1\}.$$
 This extends and improves the result
of Aramova et al. Because  $\max\{1,\rate_S(R)\}\geq \rat(R)$. Also,
their statement for polynomial rings holds for Koszul algebras, as
we expect. Indeed,  if $R$ is  Koszul and $M$ is generated in
degrees $0$, then
$$\rate_{R^{(c)}}(M)\leq \max\{ \lceil \rate_{R}(M)/c \rceil,1\},$$
for all $c\geq 1$. In  a special case, when $M= \m^s(s)$, the $s$-th
power of the homogeneous maximal ideal of $R$ shifted by $s$, we
could modify the inequality and we prove that
$$\rate_{R^{(c)}}(\m^s(s))\leq \lceil \rat(R)/c\rceil,
$$ for all $c\geq1.$ As a consequence of this, we get the result of
Backelin.

Throughout this paper, unless otherwise stated, $K$ is a field and
$R= \oplus_{i\in\N_0 }R_i$ denotes a standard graded $K$-algebra,
i.e. $R_0= K$ and $R$ is generated (as a $K$-algebra) by finitely
many elements of degree one. Also, $M= \oplus_{i\in\Z }M_i$ denotes
a finitely generated graded $R$-module.

\section{\bf Notations and Generalities}
\indent

 In this section we prepare some notations and preliminaries which will be used in the paper.

\begin{rem}\label{nota}

 \begin{enumerate}
 \item  For each $d\in\mathbb{Z}$ we denote by
$M(d)$ the graded $R$-module with $M(d)_{p}=M_{d+p}$, for all $p\in\mathbb{Z}.$

Denote by $\mathfrak{m}$ the maximal homogeneous ideal of $R$, that
is $\m= \oplus_{i\in\N}R_i$. Then, we may consider $K$ as a graded
$R$-module via the identification $K=R/\m$.


\item A minimal graded free resolution of $M$ as an $R$-module is a
complex of free $R$-modules
\[\mathbf{F}= \cdots F_{i}\xrightarrow{\partial_{i}}F_{i-1}\rightarrow\cdots\rightarrow F_{1}\xrightarrow{\partial_{1}} F_{0}\rightarrow 0\]
such that $\textsc{H}_{i}(\mathbf{F})$, the $i$-th homology module
of $\mathbf{F}$, is zero for $i>0$, $\textsc{H}_{0}(\mathbf{F})=M$
and $\partial_{i}(F_{i})\subseteq \mathfrak{m}F_{i-1}$ for all $i\in
\mathbb{N}_{0}$. Each $F_{i}$ is isomorphic to a direct sum of
copies of $R(-j)$, for $j\in \mathbb{Z}$. Such a resolution exists
and any two minimal graded free resolutions of $M$ are isomorphic as
complexes of graded $R$-modules. So, for all $j\in\mathbb{Z}$ and
$i\in\mathbb{N}_{0}$  the number of direct summands of $F_{i}$
isomorphic to $R(-j)$  is an invariant of $M$,
 called the $ij$-th graded Betti number of $M$ and denoted by $\beta_{i j}^{R}(M)$.

  Also, by definition,
 the $i$-th Betti number of $M$ as an $R$-module, denoted by  $\beta^{R}_{i}(M)$, is the rank of $F_{i}$.

 By construction, one has $\beta_{i }^{R}(M)=\dim_{K} \Tor^{R}_{i}(M, K)$ and
$\beta_{i j}^{R}(M)=\dim_{K} \Tor^{R}_{i}(M, K)_{j}.$



\item   For every integer $i$ we set
$$t^{R}_i(M):= \max \{j: \beta_{i j}^{R}(M)\neq 0\},$$
if $\beta_{i}^{R}(M)\neq 0$ and $t^{R}_i(M)=-\infty$ otherwise.

\item The Castelnuovo-Mumford regularity
of $M$ is defined by
\[\reg_{R}(M):= \sup\{t^{R}_i(M)-i : i\in\mathbb{N}_0\}.\]
\end{enumerate}
\end{rem}
\begin{defin-rem}

 $M$ is called Koszul if $\reg_R(gr_{\m}(M))=0$. The ring $R$ is
Koszul if the residue field $K$, as an $R$-module, is Koszul.

The  Castelnuovo-Mumford regularity  plays an important role in the
study of homological properties of $M$ and it is clear that
$\reg_R(M) $ can be infinite. Avramov  and Peeva in \cite{AP} proved
that $\reg_R(K)$ is zero or infinite. Also, Avramov and Eisenbud in
\cite{A-E} showed that if $R$ is Koszul, then the regularity of
every  finitely generated graded $R$-module is finite.
\end{defin-rem}

\section{The rate of modules}
 The notion of rate for algebras introduced by Backelin in \cite{B} and  generalized in \cite{ABH} for graded modules.
The rate of a graded algebra is an invariant that measures how far $R$ is from being Koszul.

\begin{defin-rem}\label{ratrem}
\noindent
 \begin{enumerate}
 \item
The Backelin rate of  $R$ is defined as

\[\rat(R):=\sup\{(t_{i}^{R}(K)-1)/i-1 :\\ i\geq 2\},\]
and generalization of this for modules is defined by
\[\rate_R(M):=\sup\{t_{i}^{R}(M)/i  :\\ i\geq 1\}.\]
A comparison with Backelin's rate shows that
$\rat(R)=\rate_{R}(\m(1))$. Note that with the above notations
$\rate_R(R)=-\infty.$

\item
Let $S\rightarrow R$ be a surjective homomorphism of standard graded
$K$-algebras and $M$ be a finitely generated graded $R$-module.
Then, by a modification of  \cite[1.2]{ABH}, one can see that
\begin{equation}\label{ratineq}
\rate_R(M)\leq \max\{
\rate_S(M),\rate_S(R)\}+\max\{0,t_{0}^{S}(M)\}.
\end{equation}

Also, it turns out that the rate of $M$ is finite (see
\cite[1.3]{ABH}).
\end{enumerate}
\end{defin-rem}
\begin{rem}\label{rat-inq}
Consider a minimal presentation of $R$ as a quotient of a polynomial ring, i.e.
$$R\cong S/I$$
where $S=K[X_{1},\cdots,X_{n}]$ is a polynomial ring and $I$ is an
ideal generated by homogeneous elements of degree $>1$. $I$ is
called a defining ideal of $R$. Let $m(I)$ denotes the maximum of
the degrees of a minimal homogeneous generator of $I$.  It follows
from (the graded version of) \cite[2.3.2]{BH}  that
$t^{R}_{2}(K)=m(I)$, thus one has
\[\rat(S/I)\geq m(I)-1.\]

From the above inequality, one can see that $Rate(R)\geq 1$ and the
equality holds if and only if $R$ is Koszul. So that $Rate(R)$ can
be taken as a measure of how much $R$ deviates from being Koszul.
Also, for a module $M$ which is generated in degree zero
 we have $\rate_R(M)\geq 1$ and the
equality holds if and only if $M$ is Koszul, that is $\reg_R(M)=0.$
\end{rem}

\begin{lem}\label{ineq}
Let
$$\cdots\rightarrow L_n\rightarrow L_{n- 1}\rightarrow \cdots\rightarrow L_1\rightarrow L_0\rightarrow L\rightarrow 0$$
be an exact sequence of graded $R$-modules and homogeneous homomorphisms. Then for all $j\in \Z$
$$t_n(L)\leq\max\{t_{n- i}(L_i) ; 0\leq i\leq n\}.$$
\end{lem}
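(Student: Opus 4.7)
\smallskip

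\noindent\textbf{Proof plan.} The plan is to split the given long exact sequence into short exact sequences and apply the long exact sequence of $\Tor$ repeatedly. Set $L_{-1}:=L$ and define, for $i\geq 0$, the syzygy-type modules $Z_i:=\ker(L_i\to L_{i-1})$. Exactness of the original complex yields the short exact sequences
$$0\longrightarrow Z_0\longrightarrow L_0\longrightarrow L\longrightarrow 0,\qquad 0\longrightarrow Z_i\longrightarrow L_i\longrightarrow Z_{i-1}\longrightarrow 0\ (i\geq 1),$$
with homogeneous maps, since the homomorphisms in the original sequence are homogeneous.

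The basic tool is the following standard consequence of the long exact sequence of $\Tor^R(-,K)$: given a short exact sequence $0\to A\to B\to C\to 0$ of graded $R$-modules and homogeneous maps, the piece
$$\Tor^R_k(B,K)_j\longrightarrow \Tor^R_k(C,K)_j\longrightarrow \Tor^R_{k-1}(A,K)_j$$
forces, for every $k\geq 0$, the degreewise inequality $t_k(C)\leq\max\{t_k(B),\,t_{k-1}(A)\}$, with the convention that $t_{-1}(\,\cdot\,)=-\infty$ (since $\Tor^R_{-1}=0$) and $t_k=-\infty$ whenever the corresponding $\Tor$ vanishes, as already stipulated in Remark~\ref{nota}.

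Applying this to the first short exact sequence above with $k=n$ gives $t_n(L)\leq\max\{t_n(L_0),\,t_{n-1}(Z_0)\}$. Applying it to the $i$-th short exact sequence with $k=n-i$ gives $t_{n-i}(Z_{i-1})\leq\max\{t_{n-i}(L_i),\,t_{n-i-1}(Z_i)\}$ for $1\leq i\leq n$. A straightforward induction on $i$, substituting each bound into the preceding one, yields
$$t_n(L)\leq \max\bigl\{\,t_{n-i}(L_i)\ :\ 0\leq i\leq n\,\bigr\}\ \cup\ \{t_{-1}(Z_n)\},$$
and the final term is $-\infty$, so it may be discarded. This proves the lemma.

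The argument is entirely mechanical once the short exact sequences are set up, so I do not anticipate any real obstacle: the only delicate point is the bookkeeping at the boundary $i=n$, which is handled by the convention $\Tor^R_{-1}=0$. Note that the phrase ``for all $j\in\Z$'' in the statement is vestigial, since the asserted bound is a supremum over degrees and the pointwise version in each degree $j$ is what actually comes out of the $\Tor$ long exact sequence; the claimed inequality is an immediate corollary.
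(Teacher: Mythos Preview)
Your proof is correct and follows essentially the same approach as the paper: both arguments split the long exact sequence into short exact sequences via the kernels and use the $\Tor$ long exact sequence to obtain $t_k(C)\leq\max\{t_k(B),t_{k-1}(A)\}$. The only cosmetic difference is that the paper organizes this as an induction on $n$ using the single kernel $K_1=\ker(L_0\to L)$ (your $Z_0$), whereas you introduce all the $Z_i$ at once and chain the resulting inequalities.
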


\begin{proof}
We prove the claim by induction on $n$.

In the case $n= 0$, the result follows using the surjection
$$\Tor_{0}^{R}(L_0, K)_j\rightarrow \Tor_{0}^{R}(L, K)_j.$$
 Now, let $n> 0$ and suppose that the result has been proved for smaller values of $n$. Let $K_1$ be the kernel of the homomorphism $L_0\rightarrow L$. Then, using the exact sequence
$$\cdots\rightarrow L_i\rightarrow \cdots\rightarrow L_1\rightarrow K_1\rightarrow 0,$$
and the inductive hypothesis, we have
$$t_{n-1}(K_1)\leq\max\{t_{n- 1-i}(L_{i+1})| 0\leq i\leq n- 1\}.$$
Now, the desired inequality follows  by considering the long exact
sequence obtained by applying $\Tor^R(-, K)$  to the exact sequence
$$0\rightarrow K_1\rightarrow L_0\rightarrow L\rightarrow0.$$
\end{proof}
In the following lemma we compare the rate of the graded $K$-algebra
$R$ and powers of its homogeneous maximal ideal.
\begin{lem}\label{maxi}
For all integers $s>0$ and $i\geq 0$, one has
$$t^{R}_{i}(\m^s(s))\leq t^{R}_{i}(\m(1)).$$  In particular,
$\rate_R(\m^s(s))\leq \rat(R)$.
\end{lem}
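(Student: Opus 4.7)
The plan is to prove the pointwise inequality $t_i^R(\m^s(s)) \leq t_i^R(\m(1))$ by induction on $s$; the rate statement then follows immediately by dividing by $i$ and taking the supremum over $i \geq 1$. The base case $s=1$ is tautological, so the real work is in the inductive step.

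For the inductive step, I would use the short exact sequence
\[
0 \to \m^{s+1} \to \m^s \to \m^s/\m^{s+1} \to 0.
\]
Since $\m^s/\m^{s+1}$ is annihilated by $\m$ and lives entirely in graded degree $s$, it is isomorphic as a graded $R$-module to $K^\beta(-s)$, where $\beta = \dim_K R_s$. Shifting the sequence by $s+1$ gives
\[
0 \to \m^{s+1}(s+1) \to \m^s(s+1) \to K^\beta(1) \to 0.
\]

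Applying $\Tor^R(-, K)$ and reading off the relevant segment of the long exact sequence yields the standard bound
\[
t_i^R(\m^{s+1}(s+1)) \leq \max\{t_{i+1}^R(K(1)),\ t_i^R(\m^s(s+1))\}.
\]
The second term equals $t_i^R(\m^s(s))-1 \leq t_i^R(\m(1))-1$ by the inductive hypothesis. For the first, I would invoke the defining short exact sequence $0 \to \m \to R \to K \to 0$; since $\Tor_i^R(R,K) = 0$ for $i \geq 1$, it produces isomorphisms $\Tor_{i+1}^R(K, K) \cong \Tor_i^R(\m, K)$ for all $i \geq 0$, and hence $t_{i+1}^R(K(1)) = t_{i+1}^R(K) - 1 = t_i^R(\m) - 1 = t_i^R(\m(1))$. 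Substituting these bounds into the max gives $t_i^R(\m^{s+1}(s+1)) \leq t_i^R(\m(1))$, which closes the induction.

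I do not anticipate a serious obstacle: the argument is a routine two-step Tor chase once one recognizes the decomposition $\m^s/\m^{s+1} \cong K^\beta(-s)$. The only place demanding genuine care is the bookkeeping of internal degree shifts, in particular verifying the identity $t_{i+1}^R(K(1)) = t_i^R(\m(1))$ that lets the inductive bound close up on itself cleanly.
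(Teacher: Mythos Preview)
Your proposal is correct and follows essentially the same route as the paper: induction on $s$ via the short exact sequence $0\to\m^{s+1}\to\m^s\to\m^s/\m^{s+1}\to 0$, identification $\m^s/\m^{s+1}\cong K^\beta(-s)$, and the dimension-shift $t_{i+1}^R(K)=t_i^R(\m)$ coming from $0\to\m\to R\to K\to 0$. Your bookkeeping of the internal degree shifts is in fact slightly more explicit than the paper's, but the argument is identical.
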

\begin{proof}
We prove the claim by induction on $s$. The case $s=1$ is obvious,
so let $s\geq 2$ and consider the exact sequence
$$0\hookrightarrow \m^s\rightarrow \m^{s-1}\rightarrow \m^{s-1}/\m^s\rightarrow0.$$
By applying $\Tor^R(-, K)$ , we get the  exact sequence
$$\Tor_{i+1}^{R}(\m^{s-1}/\m^s, K)_j\rightarrow \Tor_{i}^{R}(\m^s, K)_j\rightarrow
\Tor_{i}^{R}(\m^{s-1}, K)_j,$$ for all $i\geq 0$. This yields the
inequality
\begin{equation}\label{maxin}
t^{R}_{i}(\m^s(s))\leq
\max\{t^{R}_{i+1}(\m^{s-1}/\m^s),t^{R}_{i}(\m^{s-1}) \}.
\end{equation}
Note that $\m^{s-1}/\m^s\simeq K(-s+1)^n$ for some integer $n$, and
that $t^{R}_{i+1}(K)=t^{R}_{i}(\m)$. Now, using the inequality
(\ref{maxin}) and inductive hypothesis, we conclude the assertion.

\end{proof}
\begin{defin-rem}\label{verinf}
Let $c$ and $d$ be integers such that $c>0$ and $0\leq d\leq c-1$.
Assume that $M$ be a finitely generated graded $R$-module.

\begin{enumerate}
 \item
  Define $R^{(c)}:=\bigoplus_{i\in \mathbb{Z}} R_ic$. Then
$R^{(c)}$ is a standard graded $K$-algebra and is a subring of $R$.
We refer to $R^{(c)}$, with this grading, as the $c$-th Veronese
subring of $R$.
 Then the graded $R$-module $M$ can be considered as a finitely generated graded
$R^{(c)}$-module via $R^{(c)}\hookrightarrow R$. \\
 \item
\indent We define $M^{(c,d)}:=\bigoplus_{i\in\mathbb{Z}}M_{ic+d}$,
an $R^{(c)}$-submodule of $M$. This called the $(c,d)$-th Veronese
submodule of $M$. In the case $d=0$, we denote $M^{(c,0)}$ by
$M^{(c)}$. Note that $M$, as a graded $R^{(c)}$-module,
decomposes in to the direct sum $M=\bigoplus_{d=0}^{c}M^{(c,d)}$.\\
\indent It is easy to see that $(-)^{(c,d)}$ is an exact functor
from the category of  graded $R$-modules to the category of graded
$R^{(c)}$-modules.\\
 \item   Let  $x$ be a real number, then we
denote by $\lceil x\rceil$ the smallest integer larger than the $x$.
Note that the $R^{(c)}$-module $R^{(c,d)}$ is generated in degrees
zero and  for any integer $j$, we have
$R(-j)^{(c,d)}=R^{(c,k_d)}(-\lceil (j-d)/{c}\rceil)$ for some $k_d$
with $0\leq k_d\leq c-1$.

Indeed, let $i_{d}$ be the smallest integer such that $i_{d} c\geq
 j-d$, i.e.  $i_d=\lceil (j-d)/{c}\rceil$. Then
 $$R(-j)^{(c,d)}=\bigoplus_{i\in\mathbb{Z}}R_{ic+d-j}=\bigoplus_{i\in\mathbb{Z}}R_{(i-i_d)c+k_d}= R^{(c, k_d)}(-i_d),$$
where $k_d=i_dc+d-j$.

 In particular cases
\begin{enumerate}
 \item  when $d=0$, we have
 $$R(-j)^{(c)}=R^{(c,k)}(-\lceil
j/c\rceil),$$ for some $k$ with $0\leq k\leq c-1$.\\
 \item when $c=1$ and $d=0$, we get
 $$R(-j)=\bigoplus_{r=0}^{c-1}R(-j)^{(c,r)}=\bigoplus_{r=0}^{c-1}R^{(c,k_r)}(-\lceil
(j-r)/c\rceil),$$ for some $k_r$ with $0\leq k_r\leq c-1.$
\end{enumerate}
\end{enumerate}
\end{defin-rem}
In the following proposition we find an upper bound for the degrees
of generators of   syzygies of $R^{(c,d)}$ as an $R^{(c)}$-module in
terms of the degrees of generators of the syzygies of the maximal
ideal of $R$. This proposition will be use in the main theorem of
the paper, too.
\begin{prop}\label{versyz}
Let $c, d$ and $n$ be integers with $0\leq d\leq c-1$ and $n\geq 0$.
Then
$$t_{n}^{R^{(c)}}(R^{(c,d)})\leq \max\big\{\sum_{j=0}^{u} \lceil t^{R}_{\alpha_{j}}(\m(1))/c\rceil
:  0\leq u\leq n,  0\leq \alpha_{j}\leq n,
\Sigma_{j=0}^{u}\alpha_{j}=n \big\}.$$

\end{prop}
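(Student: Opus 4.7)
The plan is to establish the inequality $t_n^{R^{(c)}}(R^{(c,d)})\le B(n)$, where $B(n)$ denotes the claimed right-hand side, by a double induction: outer on $n$, inner on $d$. A few bookkeeping facts are immediate: $B(n)$ does not depend on $d$; $B(0)=0$ since $t_0^R(\m(1))=0$; and for every $i\ge 1$ one has $B(n-i)+\lceil t_i^R(\m(1))/c\rceil\le B(n)$, because augmenting any admissible tuple summing to $n-i$ by the entry $\alpha=i$ produces an admissible tuple summing to $n$. The base case $n=0$ follows because $R^{(c,d)}$ is generated over $R^{(c)}$ in degree zero.

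Now fix $n\ge 1$ and assume the bound for all $m<n$ and every $0\le k\le c-1$. Run a sub-induction on $d$. For $d=0$ the module $R^{(c,0)}=R^{(c)}$ is $R^{(c)}$-free, so $t_n^{R^{(c)}}(R^{(c,0)})=-\infty$. For $1\le d\le c-1$, take the minimal graded free resolution $G_\bullet\to \m(1)$ over $R$, so that $G_i=\bigoplus_j R(-j)^{\beta_{ij}^R(\m(1))}$ with $j\le t_i^R(\m(1))$ and in particular $G_0$ is free without shift. Applying the exact functor $(-)^{(c,d-1)}$ of Remark~\ref{verinf} produces an exact sequence of $R^{(c)}$-modules
$$\cdots\to G_i^{(c,d-1)}\to\cdots\to G_0^{(c,d-1)}\to (\m(1))^{(c,d-1)}\to 0,$$
and a direct grading check shows $(\m(1))^{(c,d-1)}=R^{(c,d)}$ precisely because $1\le d\le c-1$.

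Applying Lemma~\ref{ineq} yields $t_n^{R^{(c)}}(R^{(c,d)})\le \max_{0\le i\le n} t_{n-i}^{R^{(c)}}(G_i^{(c,d-1)})$. Using Remark~\ref{verinf}, $G_i^{(c,d-1)}$ decomposes as a direct sum of modules $R^{(c,k)}(-\lceil(j-d+1)/c\rceil)$ with $j\le t_i^R(\m(1))$, whose maximum shift is at most $\lceil t_i^R(\m(1))/c\rceil$ because $d\ge 1$. Hence
$$t_{n-i}^{R^{(c)}}\bigl(G_i^{(c,d-1)}\bigr)\le \max_{0\le k\le c-1} t_{n-i}^{R^{(c)}}(R^{(c,k)})+\lceil t_i^R(\m(1))/c\rceil.$$
For $i\ge 1$ the outer hypothesis bounds the first factor by $B(n-i)$, so the $i$-th term is $\le B(n)$. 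For $i=0$, $G_0^{(c,d-1)}$ is a direct sum of copies of $R^{(c,d-1)}$, so the $i=0$ term equals $t_n^{R^{(c)}}(R^{(c,d-1)})$, which is $\le B(n)$ by the sub-inductive hypothesis on $d$.

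The main obstacle is precisely this $i=0$ contribution: a single induction on $n$ alone would circle back to $t_n^{R^{(c)}}(R^{(c,d-1)})$ and fail to close. Introducing the inner induction on $d$, anchored by the free case $d=0$ and climbing one Veronese index at a time via the identification $(\m(1))^{(c,d-1)}=R^{(c,d)}$, is exactly what breaks the circularity.
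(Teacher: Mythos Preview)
Your argument is correct, but it follows a genuinely different route from the paper's. The paper observes that $(\m^d(d))^{(c)}=R^{(c,d)}$, resolves $\m^d(d)$ over $R$, and applies the functor $(-)^{(c)}$; because $\m^d(d)$ is generated in degree zero, the bottom term $F_0^{(c)}$ is already a free $R^{(c)}$-module, so the $i=0$ contribution in the inequality from Lemma~\ref{ineq} is $-\infty$ for $n\ge 1$ and no inner induction on $d$ is needed. The price is that the shifts appearing in $F_i^{(c)}$ are governed by $t_i^R(\m^d(d))$, and the paper invokes Lemma~\ref{maxi} to replace these by $t_i^R(\m(1))$. Your approach instead resolves $\m(1)$ once and for all and applies $(-)^{(c,d-1)}$, using the identification $(\m(1))^{(c,d-1)}=R^{(c,d)}$ for $1\le d\le c-1$; this keeps all shifts expressed directly in terms of $t_i^R(\m(1))$ and so bypasses Lemma~\ref{maxi} entirely, but the bottom term $G_0^{(c,d-1)}\cong (R^{(c,d-1)})^{\beta_0}$ is no longer free, which is exactly why you need the secondary induction on $d$ anchored at the free case $d=0$. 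Each approach trades one ingredient for another: the paper uses an auxiliary comparison lemma to avoid the double induction, while you accept the double induction to stay self-contained.
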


\begin{proof}
  Let $c$ and $d$  be   integers with $c>0$ and
$0\leq d\leq c-1$. Consider the graded $R$-module $\m^d(d)$ which is
generated in degree zero. Then, $(\m^d(d))^{(c)}=R^{(c,d)}$. Also,
assume that
$$\mathbf{F}= \cdots
F_{n}\rightarrow F_{n- 1}\rightarrow\cdots\rightarrow
F_{1}\rightarrow F_{0}\rightarrow 0$$ be the minimal graded free
resolution of $\m^d(d)$ as an $R$-module. Then, applying the exact
functor $(-)^{(c)}$ to $\mathbf{F}$ we get an exact complex of
$R^{(c)}$-modules
$$ \mathbf{F}^{(c)}:\cdots\rightarrow F_{n}^{(c)}\rightarrow F_{n-
1}^{(c)}\rightarrow \cdots\rightarrow  F_{0}^{(c)}\rightarrow
R^{(c,d)}\rightarrow 0.$$
 Let $G_i :=F_{i}^{(c)}$  and note that
$G_i =\bigoplus_{j\in\mathbb{Z}} (R(-j)^{(c)})^{\beta_{i
j}^{R}(\m^d(d))}$, for all $i\geq 0$. Then, in view of lemma
\ref{ineq}, we get
\begin{equation}\label{t-ineq}
t^{R^{(c)}}_n(R^{(c,d)})\leq\max\{t^{R^{(c)}}_{n- i}(G_i) ; 0\leq
i\leq n\},
\end{equation}

 for all $n\in \N_0$.

 Now, we prove the claim by induction on $n$.
 Note that $R^{(c,d)}$ and $\m(1)$ are generated in degree aero, as $R^{(c)}$ and $R$-modules,
  respectively.
  Therefore, in the case where $n= 0$ we have
  $$t^{R^{(c)}}_0(R^{(c,d)})= 0= \lceil
t^{R}_0(\m(1))/c\rceil.$$

  For $n=1$, one has
$$t^{R^{(c)}}_1(R^{(c,d)})\leq\max\{t^{R^{(c)}}_{1}(G_0),
t^{R^{(c)}}_{0}(G_1)\}.$$

 Since $G_0$ is a free $R^{(c)}$-module,
$t^{R^{(c)}}_{j}(G_0)=-\infty$ for all $j>0$. Now, using
\ref{verinf}(3)(a), we get
$$t^{R^{(c)}}_1(R^{(c, d)})\leq
t^{R^{(c)}}_{0}(G_1)\leq\max\{t^{R^{(c)}}_{0}(R^{(c,k_j)})+\lceil
t^{R}_1(\m^d(d))/c\rceil :0\leq k_j\leq c-1 \}.$$ Since
$R^{(c,k_j)}$ is generated in degree zero as an $R^{(c)}$-module,
 using Lemma
\ref{maxi},
$$t^{R^{(c)}}_1(R^{(c,d)})\leq \lceil t^{R}_1(\m(1))/c\rceil,$$
as desired.


 \indent  Now, let $n>1$ and
suppose that the result has been proved for smaller values of $n$.
That is
$$t_{i}^{R^{(c)}}(R^{(c,k)})\leq \max\{\sum_{j=1}^{u} \lceil t^{R}_{\alpha_{j}}(\m(1)) /c\rceil : 1\leq u\leq i, \ 1\leq \alpha_{j}\leq i, \   \Sigma_{j=1}^{u}\alpha_{j}=i\}$$
for all $0 \leq i<n$ and  all $0\leq k\leq c-1$.

  Let $0<
i\leq n$. Since for all $j\in \Z$, by \ref{verinf}(3)(a),
$R(-j)^{(c)}= R^{(c,k_j)}(-\lceil j/c\rceil)$ for some $0\leq
k_j\leq c-1$,  we have $$G_i =\bigoplus_{j\in\mathbb{Z}}
R^{(c,k_j)}(-\lceil j/c\rceil))^{\beta_{i j}^{R}(\m^d(d))}.$$
 Hence,
$$t^{R^{(c)}}_{n- i}(G_i) =\max\{t^{R^{(c)}}_{n-
i}(R^{(c,k_j)})+\lceil t^{R}_i(\m^d(d))/c\rceil \}.$$ Applying lemma
\ref{maxi} and using inductive hypothesis, one has
\begin{equation}\label{vreq2}
t^{R^{(c)}}_{n- i}(G_i) \leq \max\big\{ \sum_{j=0}^{u} \lceil
t_{\alpha_{j}}^{R}(\m(1))/c\rceil +\lceil t_i/c\rceil : 0\leq u\leq
n-i, \ 0\leq \alpha_{j}\leq n-i, \ \Sigma_{j=0}^{u}\alpha_{j}=n-i
\big\}.
\end{equation}
Now, by the inequalities (\ref{t-ineq}) and (\ref{vreq2}), we
conclude that
$$t_{n}^{R^{(c)}}(R^{(c,d)})\leq \max\big
\{\sum_{j=0}^{u} \lceil t_{\alpha_{j}}^{R}(\m(1))/c\rceil : 0\leq
u\leq n, \ \Sigma_{j=0}^{u}\alpha_{j}=n, \  0\leq \alpha_{j}\leq
n\big\},$$ as desired.
\end{proof}
 Now, we prove the main result.

\begin{thm}\label{mainthm}
Let $R$ be a standard graded $K$-algebra and $M$ be a finitely
generated  graded $R$-module. Then for all integers $c\geq 1,$
  $$\rate_{R^{(c)}}(M)\leq \lceil \max\{
\rate_{R}(M),\rat(R)\}/c\}\rceil+ \max\{0,\lceil
t^{R}_{0}(M)/c\rceil\}.$$ In particular  for all integers $s,c\geq
1,$
$$\rate_{R^{(c)}}(\m^s(s))\leq \lceil \rat(R)/c\rceil.$$

\end{thm}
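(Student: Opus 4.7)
The proof I would propose follows the strategy already set up by Proposition \ref{versyz}, pushed one step further to handle an arbitrary module rather than just $R^{(c,d)}$. The idea is: decompose $M$ as an $R^{(c)}$-module, resolve each Veronese piece using the minimal $R$-resolution of $M$, and then plug Proposition \ref{versyz} into the bound coming from Lemma \ref{ineq}.

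\textbf{Setup and reduction.} As an $R^{(c)}$-module we have $M=\bigoplus_{d=0}^{c-1}M^{(c,d)}$, so $\Tor^{R^{(c)}}_i(M,K)=\bigoplus_d\Tor^{R^{(c)}}_i(M^{(c,d)},K)$ and hence $\rate_{R^{(c)}}(M)=\max_d\rate_{R^{(c)}}(M^{(c,d)})$. So I fix $d$ and bound $t_n^{R^{(c)}}(M^{(c,d)})$ for every $n\ge 1$. Let $\mathbf F\to M$ be the minimal graded free resolution over $R$. Since $(-)^{(c,d)}$ is exact (Definition-Remark \ref{verinf}), applying it yields an exact complex $\mathbf F^{(c,d)}\to M^{(c,d)}\to 0$ of $R^{(c)}$-modules. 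Using $R(-j)^{(c,d)}=R^{(c,k_j)}(-\lceil (j-d)/c\rceil)$ for some $0\le k_j\le c-1$, each $F_i^{(c,d)}$ is a direct sum of shifted Veronese modules, and Lemma \ref{ineq} gives
\[
t_n^{R^{(c)}}(M^{(c,d)})\le\max_{0\le i\le n}\max_{j:\beta^R_{ij}(M)\ne 0}\Bigl(t_{n-i}^{R^{(c)}}(R^{(c,k_j)})+\bigl\lceil (j-d)/c\bigr\rceil\Bigr).
\]

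\textbf{Controlling the two factors.} Write $\rho:=\max\{\rate_R(M),\rat(R)\}$ and $t_0:=t_0^R(M)$. For the first factor, Proposition \ref{versyz} bounds $t_{n-i}^{R^{(c)}}(R^{(c,k_j)})$ by a maximum of sums $\sum_\ell\lceil t^R_{\alpha_\ell}(\m(1))/c\rceil$ over compositions of $n-i$. By definition of $\rat(R)$, $t^R_{\alpha_\ell}(\m(1))\le\alpha_\ell\rat(R)\le\alpha_\ell\rho$, so each summand is at most $\lceil\alpha_\ell\rho/c\rceil\le\alpha_\ell\lceil\rho/c\rceil$; summing collapses to the clean bound $t_{n-i}^{R^{(c)}}(R^{(c,k_j)})\le(n-i)\lceil\rho/c\rceil$. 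For the second factor I split on $i$: if $i\ge 1$, then $j\le t_i^R(M)\le i\rate_R(M)\le i\rho$, hence $\lceil(j-d)/c\rceil\le\lceil j/c\rceil\le i\lceil\rho/c\rceil$, whereas if $i=0$ then $j\le t_0$, so $\lceil(j-d)/c\rceil\le\max\{0,\lceil t_0/c\rceil\}$ (the two cases $t_0\ge 0$ and $t_0<0$ both being handled by this uniform bound).

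\textbf{Combining.} Adding the two estimates for $i\ge 1$ gives $(n-i)\lceil\rho/c\rceil+i\lceil\rho/c\rceil=n\lceil\rho/c\rceil$, which is already dominated by $n\lceil\rho/c\rceil+\max\{0,\lceil t_0/c\rceil\}$. For $i=0$ the sum is exactly $n\lceil\rho/c\rceil+\max\{0,\lceil t_0/c\rceil\}$, so
\[
t_n^{R^{(c)}}(M^{(c,d)})\le n\lceil\rho/c\rceil+\max\{0,\lceil t_0/c\rceil\}.
\]
Dividing by $n$ and taking the supremum over $n\ge 1$ (the maximum is at $n=1$) gives the first claimed inequality. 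For the second statement, take $M=\m^s(s)$: then $t_0^R(M)=0$, so the last term vanishes, and Lemma \ref{maxi} gives $\rate_R(\m^s(s))\le\rat(R)$, so $\rho=\rat(R)$ and the bound reduces to $\lceil\rat(R)/c\rceil$.

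\textbf{Main obstacle.} The delicate step is the combinatorial collapse in Proposition \ref{versyz}: the bound there is a maximum over all compositions $\alpha_0+\cdots+\alpha_u=n-i$, and a priori this maximum could grow with the number of parts. The key trick is the elementary inequality $\lceil\alpha\rho/c\rceil\le\alpha\lceil\rho/c\rceil$, which linearizes each term in $\alpha_\ell$ and makes the whole composition sum telescope to $(n-i)\lceil\rho/c\rceil$. Everything else is bookkeeping with Lemma \ref{ineq} and the decomposition of $R(-j)^{(c,d)}$.
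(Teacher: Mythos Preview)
Your proof is correct and follows essentially the same route as the paper's: resolve $M$ over $R$, view the resolution over $R^{(c)}$, apply Lemma \ref{ineq}, decompose each free piece via Definition--Remark \ref{verinf}(3), feed in Proposition \ref{versyz}, and collapse the composition sums with $\lceil \alpha\rho/c\rceil\le\alpha\lceil\rho/c\rceil$. The only cosmetic difference is that you split $M=\bigoplus_d M^{(c,d)}$ first and apply $(-)^{(c,d)}$ to the resolution, whereas the paper works with $M$ as a whole and uses the full decomposition $R(-j)=\bigoplus_r R^{(c,k_r)}(-\lceil(j-r)/c\rceil)$; summing your argument over $d$ recovers the paper's computation verbatim.
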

\begin{proof}
Let
$$\mathbf{F}= \cdots F_{i}\rightarrow
F_{i-1}\rightarrow\cdots\rightarrow F_{1}\rightarrow
F_{0}\rightarrow 0$$ be the minimal graded free resolution of $M$ as
an $R$-module. Then, $\mathbf{F}$ is, also, an acyclic complex of
$R^{(c)}$-modules. Applying lemma \ref{ineq}, we get
\begin{equation}\label{tineq}
t^{R^{(c)}}_n(M)\leq\max\{t^{R^{(c)}}_{i}(F_j) ; \  0\leq i,j \
\text{and} \ i+j= n\}.
\end{equation}
In view of 2.5(3)(b), we have
\begin{eqnarray*}
F_j&=& \bigoplus_{s\in \Z}R(-s)^{\beta_{js}^{R}(M)}\\
&=& \bigoplus_{s\in \Z}\big(\oplus_{r= 0}^{c- 1}R^{(c,
k_{s})}(-\lceil(s- r)/c\rceil)\big)^{\beta_{js}^{R}(M)}, \ \ for \
some \ 0\leq k_s\leq c- 1.
\end{eqnarray*}

 Therefore,
\begin{eqnarray*}
t^{R^{(c)}}_{i}(F_j)&=&\max\{t_{i}^{R^{(c)}}(R^{(c,k)})+ \lceil
(s-r)/c\rceil, 0\leq k ,r\leq c-1, \beta_{j s}^{R}(M)\neq 0\}\\
&\leq& t_{i}^{R^{(c)}}(R^{(c, k)})+ \lceil t_j^R(M)/c\rceil.
\end{eqnarray*}
 Now, applying proposition \ref{versyz}, one has
\begin{equation}\label{backineq}
t^{R^{(c)}}_i(F_j)\leq\max\big\{\sum_{v=0}^{u} \lceil
t_{\alpha_{v}}^R(\m(1))/c\rceil+ \lceil t^{R}_{j}(M)/c\rceil: 0\leq
u\leq i, 0\leq \alpha_{v}\leq i, \ \Sigma_{v=0}^{u}\alpha_{v}=i
\big\},
\end{equation}
for all $i,j\geq 0$.

 Set $b:= \max\{\rat(R),\rate_R(M)\}$.
Then, by definition, $t_{\alpha}^R(\m(1))\leq \alpha b$ and
$t^{R}_{\alpha}(M) \leq \alpha b$ for all integer $\alpha>0$. Since
for any real number $x$ and any positive integer $m$ one has $\lceil
mx \rceil\leq m \lceil x \rceil$, we get
$$\lceil t_{\alpha}^R(\m(1))/c\rceil \leq \lceil \alpha b/c\rceil
\leq \alpha \lceil b/c\rceil,$$
and
$$\lceil {t^{R}_{\alpha}(M)}/c\rceil\leq
\lceil \alpha b/c\rceil\leq  \alpha\lceil b/c\rceil,$$ for all
 integer $\alpha>0$. Therefore, in view of the inequality (\ref{backineq}),
we get

\begin{equation}\label{tineq2}
t^{R^{(c)}}_i(F_j)\leq
\begin{cases} (i+j)\lceil b/c\rceil&\text{if}\ \ j>0\\
i\lceil b/c\rceil +  \lceil t^{R}_{0}(M)/c\rceil &\text{if}\ \ j=0,
\end{cases}
\end{equation}
for all $i,j\geq 0$. This, in conjunction with the inequality
(\ref{tineq}), implies that
$$t^{R^{(c)}}_n(M)/n\leq \lceil b/c\rceil+\max\{ 0, \lceil t^{R}_{0}(M)/c\rceil\},$$

for all $n\geq 1$. Hence, we get
$$\rate_{R^{(c)}}(M)\leq \lceil
b/c\rceil+\max\{ 0, \lceil t^{R}_{0}(M)/c\rceil\},$$ as desired.


In the case where $M= \m^s(s)$, for some integer $s\geq 1$, using
lemma \ref{maxi}, we have
$$\rate_{R^{(c)}}(\m^s(s))\leq
\lceil\rat(R)/c\rceil.$$

\end{proof}

Let $R\simeq S/I$, where $S$ is a polynomial ring over $K$ and $I$ a
homogeneous ideal of $S$. Aramova, B$\breve{a}$rc$\breve{a}$nescu
and Herzog in \cite{ABH} showed that for all finitely generated
graded $R$-module $M$ which generated in degree zero,
$$\rate_{R^{(c)}}(M)\leq \max\{ \lceil \rate_{R}(M)/c \rceil,1\},$$
for all $c\geq \max\{1,\rate_S(R)\}.$
 Moreover, by their result,
 if $R$ is a polynomial ring then, the inequality holds for all $c\geq1$.
 Using (\ref{ratineq}) in remark \ref{ratrem}, it is straightforward
 to  see that
 $$\rat(R)\leq \max\{\rate_S(R),1\}.$$
 As an immediate consequence of the above theorem, we have
 the following corollary  that improves the theorem of Aramova et al.

\begin{cor}
Let $M$ be a finitely generated graded $R$-module generated in
degrees zero. Then
$$\rate_{R^{(c)}}(M)\leq \max\{ \lceil
\rate_{R}(M)/c \rceil,1\}, $$
 for all $c\geq \rat(R)$.
 In particular, in the case where $R$ is a Koszul algebra the above
inequality holds for all $c\geq 1$.
\end{cor}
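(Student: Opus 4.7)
The plan is to resolve $M$ minimally over $R$, view the resolution as an acyclic complex of $R^{(c)}$-modules, and combine Lemma \ref{ineq} with Proposition \ref{versyz} to control the $R^{(c)}$-syzygies. Concretely, let
$$\mathbf{F} = \cdots \to F_i \to F_{i-1} \to \cdots \to F_1 \to F_0 \to 0$$
be the minimal graded free resolution of $M$ over $R$. It remains acyclic when regarded over $R^{(c)}$, so Lemma \ref{ineq} yields $t^{R^{(c)}}_n(M) \leq \max\{t^{R^{(c)}}_i(F_j) : i+j=n,\ i,j \geq 0\}$.

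Next I would expand each $F_j = \bigoplus_{s} R(-s)^{\beta^R_{js}(M)}$ using the decomposition in \ref{verinf}(3)(b), which writes $R(-s)$ as a direct sum of shifted Veronese submodules $R^{(c,k_r)}(-\lceil(s-r)/c\rceil)$ for $0\leq r \leq c-1$. A direct bookkeeping of the shifts produces
$$t^{R^{(c)}}_i(F_j) \leq \max_{0\leq k\leq c-1}\bigl(t^{R^{(c)}}_i(R^{(c,k)}) + \lceil t^R_j(M)/c\rceil\bigr),$$
and Proposition \ref{versyz} then controls the right-hand side by sums $\sum_v \lceil t^R_{\alpha_v}(\m(1))/c\rceil$ with $\sum_v \alpha_v = i$, plus the shift $\lceil t^R_j(M)/c\rceil$.

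Setting $b := \max\{\rat(R), \rate_R(M)\}$, the definitions of the two rates give $t^R_\alpha(\m(1)) \leq \alpha b$ for $\alpha \geq 1$ and $t^R_j(M) \leq jb$ for $j \geq 1$. Using the elementary inequality $\lceil m x \rceil \leq m \lceil x \rceil$ for positive integers $m$, I would deduce $\lceil t^R_\alpha(\m(1))/c\rceil \leq \alpha \lceil b/c\rceil$ and similarly for $t^R_j(M)$ when $j > 0$. Summing over the $\alpha_v$ (noting $\sum \alpha_v = i$) and adding the shift from $F_j$ gives $t^{R^{(c)}}_i(F_j) \leq (i+j)\lceil b/c\rceil$ whenever $j > 0$, while the edge case $j=0$ leaves the unavoidable term $\lceil t^R_0(M)/c\rceil$. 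Combining with the bound on $t^{R^{(c)}}_n(M)$, dividing by $n$, and taking the supremum over $n\geq 1$ produces the main inequality. The ``in particular'' clause is then immediate: $\m^s(s)$ is generated in degree zero so that $t^R_0(\m^s(s)) = 0$, and by Lemma \ref{maxi} we have $\rate_R(\m^s(s)) \leq \rat(R)$, hence $b = \rat(R)$.

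The main obstacle, as I see it, is purely the bookkeeping: the contribution of the zeroth step $F_0$ cannot be absorbed into the $(i+j)\lceil b/c\rceil$ estimate (since we cannot bound $t^R_0(M)$ by $0 \cdot b$), so the $\lceil t^R_0(M)/c\rceil$ correction must be tracked separately throughout. Getting the shift indices $k_r$ right when passing $R(-s)$ through the Veronese functor, and keeping the ceilings distributed correctly over sums, is what the proof ultimately comes down to.
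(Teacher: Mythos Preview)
Your argument is a faithful reconstruction of the proof of Theorem \ref{mainthm}, not of the corollary you were asked to prove. What you have actually established (correctly, and by exactly the method the paper uses for the theorem) is
\[
\rate_{R^{(c)}}(M)\leq \lceil b/c\rceil+\max\{0,\lceil t^{R}_{0}(M)/c\rceil\},\qquad b=\max\{\rat(R),\rate_R(M)\},
\]
for all $c\geq 1$. From here the corollary is a two-line deduction, but you have not carried it out: you never invoke the hypothesis that $M$ is generated in degree zero (which gives $t^{R}_{0}(M)=0$, killing the second term you called ``unavoidable''), and you never use the hypothesis $c\geq \rat(R)$ (which, since the ceiling is monotone and $\rat(R)\geq 1$, turns $\lceil b/c\rceil=\max\{\lceil\rate_R(M)/c\rceil,\lceil\rat(R)/c\rceil\}$ into $\max\{\lceil\rate_R(M)/c\rceil,1\}$). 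Without those two observations you have not reached the displayed inequality of the corollary.

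Your ``in particular'' paragraph confirms the mix-up: you treat $M=\m^s(s)$ and appeal to Lemma \ref{maxi}, which is the second assertion of Theorem \ref{mainthm}. The corollary's ``in particular'' is instead the statement that for a Koszul algebra $R$ the inequality holds for every $c\geq 1$; this follows simply because $\rat(R)=1$ when $R$ is Koszul, so the condition $c\geq\rat(R)$ is automatic. In short, the machinery you assembled is exactly right and matches the paper, but you stopped at the theorem and omitted the short specialization that constitutes the corollary's proof.
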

Backelin (\cite{B}) used complex arguments about a lattice of
ideals, derived from a presentation of $R$ as a quotient of a free
noncommutative algebra, to prove that the $c$-th Veronese subring
$R^{(c)}$ is a Koszul algebra for all sufficiently large values of
$c$. Indeed he showed that $\rat(R^{(c)})\leq \lceil\rat(R)/c
\rceil$. The next corollary presents a simple proof for the theorem
of Backelin.
\begin{cor}
Let $R$ be a standard graded $K$-algebra. Then
$$\rat(R^{(c)})\leq \lceil \rat(R)/c\rceil.$$
\end{cor}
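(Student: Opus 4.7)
The plan is to derive this corollary directly from the ``in particular'' clause of Theorem \ref{mainthm}, which states that $\rate_{R^{(c)}}(\m^s(s)) \leq \lceil \rat(R)/c\rceil$ for all $s, c \geq 1$. Writing $\n$ for the homogeneous maximal ideal of $R^{(c)}$, we have by definition $\rat(R^{(c)}) = \rate_{R^{(c)}}(\n(1))$, so the task reduces to relating $\n(1)$, as an $R^{(c)}$-module, to some $\m^s(s)$ in a way that the theorem can be brought to bear.

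My idea is to take $s = 1$ and exhibit $\n(1)$ as an $R^{(c)}$-direct summand of $\m(1)$. Indeed, by the discussion in \ref{verinf}(2), as an $R^{(c)}$-module $\m(1)$ splits into its Veronese components
$$\m(1) = \bigoplus_{d=0}^{c-1} \m(1)^{(c,d)}.$$
A direct inspection of the graded pieces identifies the summand indexed by $d = c - 1$ with $\n(1)$: for $i \geq 0$ we have $\m(1)^{(c,c-1)}_i = \m_{ic+c} = R_{(i+1)c} = \n(1)_i$. Since Tor (and hence each $t_i^{R^{(c)}}$) distributes over direct sums, we conclude that $t_i^{R^{(c)}}(\n(1)) \leq t_i^{R^{(c)}}(\m(1))$ for every $i \geq 1$, whence $\rate_{R^{(c)}}(\n(1)) \leq \rate_{R^{(c)}}(\m(1))$.

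Applying Theorem \ref{mainthm} in the special form $\rate_{R^{(c)}}(\m^s(s)) \leq \lceil \rat(R)/c\rceil$ with $s = 1$ then gives $\rate_{R^{(c)}}(\m(1)) \leq \lceil \rat(R)/c\rceil$, and chaining the two inequalities delivers the desired bound $\rat(R^{(c)}) \leq \lceil \rat(R)/c\rceil$. There is no substantive obstacle in this argument; the only point worth checking carefully is the degree bookkeeping that identifies $\n(1)$ as precisely the $(c, c-1)$-Veronese submodule of $\m(1)$, which is the elementary calculation indicated above.
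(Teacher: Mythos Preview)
Your proof is correct and follows essentially the same route as the paper: identify the (shifted) maximal ideal of $R^{(c)}$ as an $R^{(c)}$-direct summand of $\m(1)$, and then apply the special case of Theorem \ref{mainthm} with $s=1$. The only difference is cosmetic---you are more explicit about which Veronese component realizes $\n(1)$ (namely $\m(1)^{(c,c-1)}$), whereas the paper simply asserts that $\m^{(c)}$ is a direct summand of $\m$ and passes to the shift.
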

\begin{proof}
Let $\m$ be the homogeneous maximal ideal of $R$. Then $\m^{(c)}$ is
the homogeneous maximal ideal of $R^{(c)}$ and it is a direct
summand of $\m$ as an $R^{(c)}$-module.
 Hence, by Theorem \ref{mainthm}, we get
$$\rat(R^{(c)})= \rate_{R^{(c)}}(\m^{(c)}(1))\leq \rate_{R^{(c)}}(\m(1))
\leq \lceil \rat(R)/c\rceil.$$
\end{proof}

Conca (\cite{AC}) showed that if $R$ is Koszul then,
$\reg_{R^{(c)}}(R^{(c,d)})=0$ for all integers $c, d$ with $0\leq
d\leq c-1$. The third part of the following corollary, also,
generalize this result.

\begin{cor}
Let the situations be as in the above theorem. Then the followings
hold.

\begin{enumerate}
\item  If $M$ is generated  in degree zero, then for all $c\geq \max \{
\rate_{R}(M),\rat(R)\},$
$$\reg_{R^{(c)}}(M)=0.$$
\item For all $c\geq \rat(R)$ and $s\geq 1$,
$$\reg_{R^{(c)}}(\m^s(s))=0.$$
\item For all $c\geq \rat(R)$
$$\reg_{R^{(c)}}(R)=0.$$
In particular, $\reg_{R^{(c)}}(R^{(c,d)})=0$ for all $c\geq \rat(R)$
and $0\leq d\leq c-1$.
\end{enumerate}
\end{cor}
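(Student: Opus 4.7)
The plan is to derive all three parts from Theorem \ref{mainthm}, combined with a uniform bound on the zeroth syzygies over $R^{(c)}$. The recurring fact, used throughout, is that if a graded $R$-module $M$ is generated in $R$-degree zero, then the standard gradedness $R_iR_j=R_{i+j}$ gives $M_{ic+d}=R_{ic}R_dM_0=R^{(c)}_i\cdot M_d$, so $M^{(c,d)}=R^{(c)}\cdot M_d$ for each $0\leq d\leq c-1$. Hence $M$ is also generated in $R^{(c)}$-degree zero and, in particular, $t_0^{R^{(c)}}(M)\leq 0$.

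For part (1), the hypothesis gives $t_0^R(M)=0$, so Theorem \ref{mainthm} simplifies to $\rate_{R^{(c)}}(M)\leq\lceil\max\{\rate_R(M),\rat(R)\}/c\rceil$, and the right-hand side equals $1$ once $c\geq\max\{\rate_R(M),\rat(R)\}$ (using $\rat(R)\geq 1$ from Remark \ref{rat-inq}). This gives $t_i^{R^{(c)}}(M)\leq i$ for $i\geq 1$, and combined with $t_0^{R^{(c)}}(M)\leq 0$ from the opening paragraph, one obtains $t_i^{R^{(c)}}(M)-i\leq 0$ for all $i\geq 0$, i.e.\ $\reg_{R^{(c)}}(M)=0$. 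Part (2) is the same calculation: $\m^s(s)$ is generated in $R$-degree zero, and the second conclusion of the main theorem gives $\rate_{R^{(c)}}(\m^s(s))\leq\lceil\rat(R)/c\rceil\leq 1$ for $c\geq\rat(R)$, after which the argument is identical.

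Part (3) is obtained by applying part (1) to $M=R$: the ring is generated in $R$-degree zero by $1$ and $\rate_R(R)=-\infty$ by the convention recalled in Remark \ref{ratrem}, so $\max\{\rate_R(R),\rat(R)\}=\rat(R)$ and one concludes $\reg_{R^{(c)}}(R)=0$ for $c\geq\rat(R)$. The parenthetical clause follows from the $R^{(c)}$-module decomposition $R=\bigoplus_{d=0}^{c-1}R^{(c,d)}$: each $R^{(c,d)}$ is a direct summand, so $\beta_{ij}^{R^{(c)}}(R^{(c,d)})\leq\beta_{ij}^{R^{(c)}}(R)$ and therefore $\reg_{R^{(c)}}(R^{(c,d)})\leq\reg_{R^{(c)}}(R)=0$. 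The only substantive step in the whole argument is the opening observation that $M$ generated in $R$-degree zero forces $t_0^{R^{(c)}}(M)\leq 0$; this is the one place where standard gradedness of $R$ is essentially used, and everything else is routine bookkeeping on top of Theorem \ref{mainthm}.
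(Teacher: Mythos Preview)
Your proposal is correct and follows essentially the same approach as the paper: both deduce the corollary from Theorem~\ref{mainthm} together with the observation that for a module $N$ generated in degree zero over $R^{(c)}$, $\reg_{R^{(c)}}(N)=0$ is equivalent to $\rate_{R^{(c)}}(N)\leq 1$. You simply spell out details the paper leaves implicit---in particular the passage from $R$-degree-zero generation to $R^{(c)}$-degree-zero generation, and the direct-summand argument for $R^{(c,d)}$.
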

\begin{proof}
One can prove the claims, using theorem \ref{mainthm} and noting
that for a finitely generated graded $R^{(c)}$-module $N$ generated
in degree zero, $\reg_{R^{(c)}}(N)=0$ if and only if
$\rate_{R^{(c)}}(N)=1$.
\end{proof}

 \vskip 1 cm




\begin{thebibliography}{AAAA}


\bibitem{ABH} A. Aramova, \c{S}. B$\breve{a}$rc$\breve{a}$nescu and J. Herzog,
{\em On the rate of relative Veronese submodules}, Rev. Roumaine
Math.Pures Appl. {\bf40} no. 3-4 (1995) 243–-251.

\bibitem{A-E}  L.L. Avramov and D. Eisenbud, {\em Regularity of modules over a Koszul algebra}, J.Algebra
153  (1992) 85--90.

\bibitem{AP} L.L. Avramov and Peeva, {\em Finite regularity and Koszul algebras},
Amer. J. Math. {\bf 123} (2001) 275–-281.

\bibitem{B}J. Backelin, {\em On the rates of growth of the homologies of Veronese subrings},
In Algebra, Algebraic Topology and Their Intersections, ed. J-E.
Roos,Springer Lect. Notes in Math. {\bf1183}  (1986) 79--100.

\bibitem{BH} W. Bruns and J. Herzog, {\em Cohen-Macaulay rings}, Cambridge Studies in
Advanced Mathematics, 39, Cambridge University Press, Cambridge,
(1993).

\bibitem{AC} A. Conca, {\em Koszul algebras and their sygygies},
preprint 2014.

\bibitem{ETR} D. Eisenbud, A. Reeves and B. Totaro, {\em Initial ideals, Veronese subrings, and rates of algebras},
Adv. Math. {\bf 109}  no. 2 (1994) 168–-187.

\end{thebibliography}
\end{document}